\newtheorem{theorem}{Theorem}[section]
\newtheorem{lemma}[theorem]{Lemma}
\newtheorem{proposition}[theorem]{Proposition}
\theoremstyle{definition}
\newtheorem{example}[theorem]{Example}
\theoremstyle{remark}
\numberwithin{equation}{section}
\begin{document}

\setcounter{page}{1}

\title[Which Topologies induced by order convergences ]{Which Topologies induced by order convergences }

\author[ K. Haghnejad Azar]{ Kazem Haghnejad Azar$^{*}$}

\address{Department of Mathematics, University of Mohaghegh Ardabili, Ardabil, Iran.
\newline
}
\email{\textcolor[rgb]{0.00,0.00,0.84}{haghnejad@uma.ac.ir}}


\let\thefootnote\relax\footnote{Copyright 2016 by the Tusi Mathematical Research Group.}

\subjclass[2010]{Primary 46B42; Secondary 47B60.}

\keywords{Riesz space, unbounded order convergence, $uo$-continuous operator, $u\sigma o$-continuous operator.}

\date{Received: xxxxxx; Revised: yyyyyy; Accepted: zzzzzz.
\newline \indent $^{*}$Corresponding author}

\begin{abstract}
In this paper, we will study on some topologies induced by order convergences in a vector lattice. We will investigate the relationships of them. 
\end{abstract} \maketitle

\section{Introduction }
Recall that a net $(x_\alpha)_{\alpha\in \mathcal{A}}$ in a Riesz space  $E$ is  {\it  order convergent}  to  $x\in E$, denoted by $x _\alpha \xrightarrow{o}x$ whenever there exists another net $(y_\beta)_{\beta\in \mathcal{B}}$  in $E$ such that $y_\beta \downarrow 0$ and that for every $\beta\in \mathcal{B}$, there exists $\alpha_0\in \mathcal{A}$ such that  $|x_\alpha -x| \leq y_\beta$ for all $\alpha\geq \alpha_0$. If there exists a net $(y_\alpha)_{\alpha\in \mathcal{A}}$ (with the same index set)  in a Riesz space $E$  such that $y_\alpha\downarrow 0$ and  $|x_\alpha -x| \leq y_\alpha$ for each $\alpha\in \mathcal{A}$, then $x _\alpha \xrightarrow{o}x$. Conversely, if $E$ is a Dedekind complete Riesz space and  $(x_\alpha)_{\alpha\in \mathcal{A}}$ is order bounded, then $x _\alpha \xrightarrow{o}x$ in $E$ implies that there exists a net  $(y_\alpha)_{\alpha\in \mathcal{A}}$ (with the same index set) such that  $y_\alpha\downarrow 0$ and  $|x_\alpha -x| \leq y_\alpha$ for each $\alpha\in \mathcal{A}$. For sequences in a Riesz space $E$,  $x _n \xrightarrow{o}x$ if and only if there exists a sequence $(y_n)$ such that $y_n\downarrow 0$ and $|x_n -x| \leq y_n$ for each $n\in \mathbb{N}$ (cf.  \cite[P.17 and P.18]{1}).  
 
We adopt \cite{2} as standard reference for basic notions on Riesz spaces and Banach lattices. Recall that a real vector space $E$ (with elements $x$,$y$,...) is called an ordered vector space if $E$ is partially ordered in such a manner that the vector space structure and order structure are compatible, that is to say, $x\leq y$ implies $x+z\leq y+z$ for every $z\in E$ and $x\geq y$ implies $\alpha x\geq \alpha y$ for every $\alpha \geq 0$ in $\mathbb{R}$.  A  Riesz space $E$ is an order vector space in which sup$(x,y)$ ( it is customary to write sometimes $x\vee y$ instead of  sup$(x,y)$ and $x\wedge y$ instead of inf$(x,y)$ ) exists for every $x,y\in E$. Let $E$ be a Riesz space, for each $x,y \in E$ with $x \leq y$, the set $[x,y]=\{z \in E : x\leq z\leq y\}$ is called an order interval.  A subset of $E$ is said to be order bounded if it is included in some order interval. A Riesz space is said to be Dedekind complete (resp. $\sigma$-Dedekind complete) if  every order bounded above subset (resp. countable subset)  has a supremum.  A subset $A$ of a Riesz space $E$ is said to be solid if it follows from $|y|\leq |x|$ whit $x\in A$ and $y\in E$ that $y\in A$ . An order ideal of $E$ is a solid subspace. A band of $E$ is an order closed order ideal.  A Banach lattice $E$ is a Banach space $(E, \|.\|)$ such that $E$ is a Riesz space  and its norm satisfies the following property: for each $x,y \in{E}$ such that $ |x| \leq |y|$, we have $\|x\|\leq \|y\|$.  A Banach lattice $E$ has order continuous norm if $\| x_\alpha\|\rightarrow 0$ for every decreasing net $(x_\alpha)_\alpha$ with $\inf_\alpha x_\alpha=0$. A vector $x>0$ in a Riesz space $E$ is called an atom if $E_x=\{y\in E : \exists \lambda >0, \  |y| \leq  \lambda x \}$, the ideal generated by $x$,  is one-dimensional if and only if $u, v\in [0,x]$ with $u\wedge v=0$ implies $u=0$ or $v=0$. A Riesz space $E$ is said to be atomic if the linear span of all atoms is order dense in $E$ if and only if it is the band generated by its atoms. For example $c$, $c_0$, $\ell_p$($1\leq p\leq \infty)$ are atomic Banach lattices and $C[0,1]$, $L_1[0,1]$ are atomless Banach lattices.  Let $E$, $F$ be Riesz spaces. An operator $T:E\rightarrow F$ is said to be order bounded if it maps each order bounded subset of $E$ into order bounded subset of $F$. The collection of all order bounded operators from a Riesz space $E$ into a Riesz space $F$ will be denoted by $\mathcal{L}_b(E,F)$. The collection of all order bounded linear functionals on a Riesz space $E$ will be denoted by $E^\sim$,  that is $E^\sim=\mathcal{L}_b(E,\mathbb{R})$ . A functional on a Riesz space is order continuous (resp. $\sigma$-order continuous) if it maps order null nets (resp. sequences) to order null nets (resp. sequences). The collection of all order continuous (resp. $\sigma$-order continuous) linear functionals on a Riesz space $E$ will be denoted by $E_n^\sim$ (resp. $E_c^\sim$). For unexplained terminology and facts on  Banach lattices and positive operators, we refer the reader to the excellent book of \cite{2}.

\section{Order Topology}

Let $E$ be a vector lattice. A subset $A$ of a $E$ is said to be quasi-order closed whenever for every  $(x _\alpha)\subseteq A$
with  $x _\alpha\uparrow x$ or  $x _\alpha\downarrow x$ implies $x\in A$. We observe that a solid subset $A\subseteq E$ is a quasi-order closed if and only if $A$ is order closed. $\theta\subseteq E$ is called order open if and only if $E\setminus \theta$ is quasi-order closed. Now consider the following topologies:
\begin{enumerate}
\item First topology is called quasi-order topology which we define as follows.
\begin{equation*}
\tau_o=\{\theta\subseteq E:~E\setminus \theta~\text{is~quasi-order~closed}\}
\end{equation*}
It is clear,  $\tau_o$ is a topology for $E$.
\item Assume that $\tau_e$ be a topology for $E$ with following basis $$\{(a,b):~a,b\in E ~\text{and}~ a<b\}.$$ We call this topology as order topology.
\end{enumerate}
 In the following proposition, we show that $(E,\tau_o)$ and $(E,\tau_e)$  are both vector topologies.

\begin{proposition}
Let $E$ be a Dedekind complete vector lattice. Then  $\tau_o$ and $\tau_e$ both are vector topology.
\end{proposition}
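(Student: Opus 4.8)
The plan is to verify that each operation is continuous by reducing, through invariance under the affine maps $x\mapsto x+c$ and $x\mapsto\mu x$ (with $\mu\neq 0$), to a short list of conditions on a neighbourhood base at the origin. For $\tau_e$ these maps carry a basic open interval to a basic open interval, since $x+c$ sends $(a,b)$ to $(a+c,b+c)$ while $\mu x$ sends $(a,b)$ to $(\mu a,\mu b)$ or $(\mu b,\mu a)$ according to the sign of $\mu$; hence they are homeomorphisms. For $\tau_o$ the same maps preserve monotone nets together with their order limits (if $x_\alpha\uparrow x$ then $x_\alpha+c\uparrow x+c$, and $\mu x_\alpha$ is monotone with limit $\mu x$), so they carry quasi-order closed sets to quasi-order closed sets and are again homeomorphisms. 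Consequently it is enough to check continuity of addition at $(0,0)$ and continuity of scalar multiplication, and for these I would verify the standard neighbourhood-base axioms at $0$.

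For $\tau_e$ a base at the origin is given by the order intervals $(a,b)$ with $a<0<b$. To obtain continuity of addition at $(0,0)$ I would show $V+V\subseteq U$ with $U=(a,b)$ and $V=(\tfrac12 a,\tfrac12 b)$: if $x,y\in V$ then $a<x+y<b$, where the strict inequalities follow from the elementary ordered-vector-space fact that $p,q\ge 0$ with $p+q=0$ force $p=q=0$ (applied, for the upper bound, to $p=\tfrac12 b-x$ and $q=\tfrac12 b-y$, and symmetrically for the lower bound). The analogue for $\tau_o$ is subtler, because there the neighbourhoods of $0$ are described only as complements of quasi-order closed sets; one would argue via the stability of monotone order convergence under addition (sums of order-null monotone nets remain order null), but the witnessing neighbourhood $V$ is then available only implicitly through the quasi-order closure, which makes the inclusion far less transparent.

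The main obstacle is continuity of scalar multiplication, which amounts to two further requirements on the neighbourhood base at $0$: each basic neighbourhood must contain a balanced neighbourhood of the origin, and each must be absorbing. Using the decomposition
\[
\mu x-\mu_0 x_0=\mu\,(x-x_0)+(\mu-\mu_0)\,x_0
\]
together with the already-established continuity of addition, the problem splits into controlling $(\mu-\mu_0)x_0$ as $\mu\to\mu_0$ (this is where one needs the neighbourhood to absorb $x_0$, i.e.\ $|x_0|\le\lambda u$ for a suitable $u>0$ and $\lambda>0$) and controlling $\mu\,(x-x_0)$ uniformly for $\mu$ near $\mu_0$ (this is where one needs a balanced neighbourhood inside the given one). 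Producing such balanced and absorbing neighbourhoods inside an arbitrary order interval, or inside an arbitrary $\tau_o$-open set, is precisely the step at which the Dedekind completeness hypothesis must be brought to bear, and I expect it to be the genuine crux: unlike the additive estimates, it cannot be read off from the definitions and seems to require building symmetric, order-bounded neighbourhoods out of the suprema and infima that Dedekind completeness guarantees. I would therefore devote the bulk of the proof to this point, treating addition and the homeomorphism properties as the routine preliminaries.
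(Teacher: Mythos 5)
Your reduction to a neighbourhood base at the origin is cleanly set up, your additive estimate for $\tau_e$ (that $V+V\subseteq U$ with $V=(\tfrac12 a,\tfrac12 b)$, using $p,q\ge 0$, $p+q=0\Rightarrow p=q=0$ to keep the inequalities strict) is correct, and your observation that translations and dilations are homeomorphisms for both topologies reproduces the only part of the argument the paper actually carries out. But your proposal stops exactly at the step you yourself identify as the crux: you never construct the balanced and absorbing neighbourhoods for scalar multiplication, and for $\tau_o$ you never exhibit the witnessing neighbourhood $V$ for joint continuity of addition. As written this is a program, not a proof. Worse, the crux step is not merely hard --- it fails. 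Take $E=\ell^\infty$, which is Dedekind complete, and the basic $\tau_e$-neighbourhood $U=(-e_1,e_1)$ of $0$, the very interval appearing in the paper's Example \ref{e1}. For $x=(0,1,1,1,\dots)$, the relation $\lambda x\le e_1$ forces $\lambda\le 0$ (second coordinate), while $-e_1\le \lambda x$ forces $\lambda\ge 0$, so $\lambda x\in U$ only for $\lambda=0$: the interval $U$ absorbs nothing along $x$ and hence contains no absorbing set. Since in any topological vector space every neighbourhood of zero is absorbing (continuity of $\lambda\mapsto\lambda x$ at $\lambda=0$), $\tau_e$ is not a vector topology on $\ell^\infty$, and no appeal to Dedekind completeness can rescue your scalar-multiplication step. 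Your own framework, carried out honestly, refutes the claim for $\tau_e$ rather than proving it.

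It is worth seeing how the paper avoids this point, because the comparison is instructive. The paper dismisses $\tau_e$ as ``obvious'' and, for $\tau_o$, verifies only that $x\mapsto tx$ for a fixed scalar $t$ carries order open sets to order open sets, and that $\theta_1+\theta_2=\bigcup_{a\in\theta_1}(a+\theta_2)$ is order open. The latter says that addition is an \emph{open map} and separately continuous in each variable; it does not yield joint continuity, for which one needs, given open $U\ni x+y$, open sets $V\ni x$ and $W\ni y$ with $V+W\subseteq U$. Joint continuity of $(t,x)\mapsto tx$ in both variables simultaneously --- precisely the absorbency and balancedness requirements you isolated --- is never addressed there at all. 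So your diagnosis of what a genuine proof of the proposition would require is sharper than the paper's own argument, but the bottom line is the same in both: the decisive joint-continuity estimates are missing, and for $\tau_e$ they cannot be supplied, since the statement itself is false in $\ell^\infty$.
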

\begin{proof}
Obvious that  $\tau_e$ is a vector topology. We only show that $\tau_o$ is vector topology.
First, we prove that the operation $x\rightarrow tx$ for each $t\in R$ is continuous. Let $\theta\subset E$ be an order open subset of $E$, then  we must show that $t\theta$ is an order open subset of $E$ for each $t\in R$. Since $\theta$ is order open, it follows that $\theta^c=F$ is quasi-order closed. Put $(t\theta)^c=G$ and $(x_\alpha)\subseteq G$ with $x_\alpha\uparrow x$. Then we have $x_\alpha \notin t\theta$ iff $t^{-1}x_\alpha\notin\theta$ iff $t^{-1}x_\alpha\in F$ for each $\alpha$ and since $t^{-1}x_\alpha\uparrow t^{-1}x$,   follows that $t^{-1}x\in F$,            implies that   $x\in tF$.    Then we have $t^{-1}x\in F$ iff  $ t^{-1}x \notin \theta$ iff $x\notin \theta$ iff $x\in G$, which follows that $G$ is quasi order closed, and so $t\theta$ is an order open subset of $E$. \\
Now we show that the operation $(x,y)\rightarrow x+y$ is continuous. Set $\theta_1$ and $\theta_2$ order open subsets of $E$, we show that $\theta_1+\theta_2$ is an order open subset of $E$. Let $a\in \theta_1$. First we prove that $a+\theta_2$ is an order open subset of $E$.  Put $\theta_2^c=F$ and $(a+\theta_2)^c=G$.  We show that  $G$ is quasi-order closed. Let $(x_\alpha)\subseteq G$ and $x_\alpha\uparrow x$ in $G$. Then we have $x_\alpha\in G$       iff  $x_\alpha\notin (a+\theta_2)$                         iff   $(x_\alpha-a)\notin \theta_2$.  Since $(x_\alpha-a)\uparrow (x-a)$, follows that  $(x-a)\in F$, and so $(x-a)\notin \theta_2$ iff $x\notin (a+\theta_2)$       iff   $x\in G$.  Thus $G$ is quasi-order closed, and so $a+\theta_2$ is an order open subset of $E$. Now by $\theta_1+\theta_2= \bigcup_{a\in \theta_1}(a+ \theta_2)$, the proof follows.    

\end{proof}

\begin{lemma} \label{2.1}
Let $E$ be a Dedekind complete vector lattice and  $\tau_o$ be a order topology for $E$. Then for each $c\in E$ and neighborhood $U_c$ of $c$, there are $a,b\in E$ such that $c\in (a,b)\subset U_c$.
\end{lemma}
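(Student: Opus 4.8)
The plan is to show that the order intervals $(a,b)$ already form a neighbourhood base at each point of $(E,\tau_e)$; granting that, the statement is immediate. The first step is routine: by the definition of a neighbourhood there is a $\tau_e$-open set $\theta$ with $c\in\theta\subseteq U_c$, so it suffices to trap an interval $(a,b)$ with $c\in(a,b)\subseteq\theta$. Thus I would replace $U_c$ by $\theta$ from the outset.

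The second step is to unwind membership in a $\tau_e$-open set. Since $\tau_e$ is the topology whose basis is $\{(a,b):a<b\}$, the open set $\theta$ is a union of finite intersections of such intervals, and because $c\in\theta$ there is a finite family with $c\in\bigcap_{i=1}^{n}(a_i,b_i)\subseteq\theta$. I would then collapse this finite intersection to a single interval by setting $a=\bigvee_{i=1}^{n}a_i$ and $b=\bigwedge_{i=1}^{n}b_i$, which exist since $E$ is a lattice. The inclusion $(a,b)\subseteq\bigcap_{i=1}^{n}(a_i,b_i)$ is the easy half: if $a<z<b$ then $a_i\le a<z$ gives $a_i<z$ and $z<b\le b_i$ gives $z<b_i$, whence $z\in(a_i,b_i)$ for every $i$. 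So the reduction to one interval is essentially formal once the endpoints $a,b$ are in place.

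The main obstacle, and the only point where the Riesz-space structure and the Dedekind-completeness hypothesis must genuinely be used, is the strict inequalities $a<c<b$, that is, checking that $c$ does not land on the boundary after the lower endpoints are replaced by their join and the upper endpoints by their meet. From $a_i<c$ for each $i$ one obtains only $a=\bigvee_i a_i\le c$ for free, and symmetrically $b=\bigwedge_i b_i\ge c$, so I would have to rule out the degenerate cases $\bigvee_i a_i=c$ and $\bigwedge_i b_i=c$ and, if necessary, perturb the endpoints downward and upward (staying inside $\bigcap_i(a_i,b_i)$) to restore strictness; this is exactly where the order completeness of $E$ has to do the work. Once the strict inequalities are secured, I set $(a,b)$ to be the resulting interval and conclude $c\in(a,b)\subseteq\theta\subseteq U_c$, as required.
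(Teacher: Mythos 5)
Your proposal argues about the wrong topology, and this is fatal. In the lemma, $U_c$ is a neighbourhood of $c$ with respect to $\tau_o$, the quasi-order topology whose closed sets are the quasi-order closed sets; the entire point of the statement, as the paper notes immediately afterwards, is to conclude $\tau_o\subseteq\tau_e$. You instead take $U_c$ to be a $\tau_e$-neighbourhood and try to show that order intervals form a neighbourhood base of the very topology they generate. That argument never touches quasi-order closedness, so even if it succeeded it would say nothing about $\tau_o$ and could not deliver $\tau_o\subseteq\tau_e$ (and note that if one takes the paper's word ``basis'' for $\tau_e$ at face value, your version of the statement is a one-line tautology needing no Dedekind completeness --- another sign that the lemma must concern $\tau_o$). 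The paper's actual proof is entirely different: assuming no interval $(a,c)$ lies in $U_c$, it picks $c_a\in(a,c)\cap U_c^c$ for each $a<c$, arranges these into an increasing net $c_{\alpha(a)}\uparrow c$ inside $U_c^c$, and invokes quasi-order closedness of the complement to force $c\in U_c^c$, a contradiction; the upper endpoint $b$ is obtained symmetrically. Dedekind completeness enters through the supremum $\sup\{c_a\}=c$, not through any perturbation of endpoints.

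Moreover, the step you defer --- repairing the degenerate cases $\bigvee_i a_i=c$ or $\bigwedge_i b_i=c$ by perturbing endpoints, with order completeness ``doing the work'' --- is not merely unfinished; it is impossible in general. Take $E=\mathbb{R}^2$ (Dedekind complete), $a_1=(0,1)$, $b_1=(2,3)$, $a_2=(1,0)$, $b_2=(3,2)$, $c=(1,1)$, so that $c\in(a_1,b_1)\cap(a_2,b_2)$ and $\bigvee_i a_i=c$ exactly. No interval $(a,b)$ with $a<c<b$ is contained in the intersection: if $(a,b)\subseteq(a_1,b_1)$, then every $z\in(a,b)$ must satisfy $z\geq(0,1)$, and if the second coordinate of $a$ were below $1$ one could exhibit $z=(a^{(1)}+\delta,\,a^{(2)})\in(a,b)$ violating this; hence $a^{(2)}\geq 1$, and symmetrically $(a,b)\subseteq(a_2,b_2)$ forces $a^{(1)}\geq 1$, so $a\geq(1,1)=c$, contradicting $a<c$. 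Thus order intervals satisfy only the subbasis, not the basis, intersection property, and your reduction of a finite intersection to a single interval fails even in the simplest Dedekind complete vector lattice. Any correct proof of the lemma has to exploit the quasi-order closedness of complements of $\tau_o$-open sets, as the paper's contradiction argument does.
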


\begin{proof}
let $c\in E$ and $U_c$ be an neighbourhood of $c$ in order topology. First we show that there is $a\in E$ such that $(a,c)\subset U_c$. By contradiction, let $(a,c)\cap U_c^c\neq \emptyset$. Then for each $a<c$ there is $c_a\in (a,c)\cap U_c^c$. It follows that $$\sup\{c_a:~c_a\in (a,c)\cap U_c^c\}=c.$$ For each $a<b<c$, we can set $c_a<c_b$. It follows that  for each $a<c$, there exists $c_{\alpha (a)}\in {(a,c)\cap U_c^c}$ with $c_{\alpha (a)}\uparrow c$.  It follows that $c\in U_c^c$, which is not possible. Thus there is $a<x$ such that $(a,c)\subset U_c$. In the similar way there is a $c<b$ such that $(c,b)\subset U_c$ and proof follows.
\end{proof}

The preceding lemma shows that $\tau_o\subseteq \tau_e$, but as following example, in general two topologies not coincide. 
\begin{example}\label{e1}
Consider $E=\ell^\infty$ and $e_1=(1,0,0,0...)$. Then $(-e_1,e_1)$ is member of $\tau_e$, but  is not belong to $\tau_o$. Consider $x_n\in \ell^\infty$ which  first $n$ terms are zero and others are 1. Obviously $x_n\downarrow 0$, but $x_n\notin (-e_1,e_1)$ for each  $n$.  This example shows that the sequence $(x_n)$ is order convergent to zero, but is not topological convergence to zero. On the other hand, since $(-e_1,e_1)\notin \tau_o$, two topologies not coincide.
\end{example}

\begin{theorem}\label{t1}
Let $E$ be a Dedekind complete vector lattice with topology $\tau_e$ and  $(x_\alpha)\subset E$. If  $x_\alpha\xrightarrow{\tau_e}0$, then  $(x_\alpha)$ is order convergence to zero.
\end{theorem}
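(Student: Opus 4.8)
The plan is to translate $\tau_e$-convergence into the statement that $|x_\alpha|$ is eventually dominated by every strictly positive element, and then to manufacture the decreasing dominating net required by the definition of order convergence using Dedekind completeness.

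First I would record what convergence in $\tau_e$ says. By Lemma \ref{2.1} the open intervals form a neighbourhood basis, so for every $b\in E$ with $b>0$ the symmetric interval $(-b,b)$ is a $\tau_e$-neighbourhood of $0$. Hence $x_\alpha\xrightarrow{\tau_e}0$ yields, for each such $b$, an index $\alpha_0$ with $x_\alpha\in(-b,b)$ for all $\alpha\geq\alpha_0$; since $-b<x_\alpha<b$ gives both $x_\alpha\leq b$ and $-x_\alpha\leq b$, we obtain $|x_\alpha|=x_\alpha\vee(-x_\alpha)\leq b$ eventually. In particular, fixing one $b_0>0$, the net is eventually order bounded, lying in $[-b_0,b_0]$ on a tail.

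Next, using Dedekind completeness, for $\alpha$ past the point where the net is order bounded I would set $w_\alpha=\sup\{|x_\gamma|:\gamma\geq\alpha\}$; this supremum exists because the tail is contained in $[-b_0,b_0]$. The family $(w_\alpha)$ is decreasing, since the tails shrink as $\alpha$ grows, and $|x_\alpha|\leq w_\alpha$, so it is the natural candidate for the dominating net. It then remains to show $\inf_\alpha w_\alpha=0$. Writing $w=\inf_\alpha w_\alpha\geq 0$, the eventual domination gives, for every $b>0$, an index $\alpha_1$ with $|x_\gamma|\leq b$ for all $\gamma\geq\alpha_1$, whence $w\leq w_{\alpha_1}\leq b$. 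Thus $0\leq w\leq b$ for every $b>0$; choosing $b=\tfrac12 w$ (legitimate if $w>0$) forces $\tfrac12 w\leq 0$ and hence $w=0$.

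Finally I would assemble this into the definition of order convergence: taking the cofinal tail as the index set $\mathcal B$ and $y_\beta=w_\beta$, one has $y_\beta\downarrow 0$ and, for each $\beta\in\mathcal B$, $|x_\alpha|\leq w_\beta=y_\beta$ whenever $\alpha\geq\beta$, which is exactly $x_\alpha\xrightarrow{o}0$. I expect the main obstacle to be the first half: one must verify that the net is genuinely order bounded on a tail so that the suprema $w_\alpha$ exist (this is precisely where Dedekind completeness enters), and one must keep in mind that the passage from the strict membership $x_\alpha\in(-b,b)$ to $|x_\alpha|\leq b$ yields only a non-strict bound. By contrast, the concluding step $\inf_\alpha w_\alpha=0$ is a soft, purely algebraic consequence of having an element bounded below every strictly positive one.
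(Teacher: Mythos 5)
Your proof is correct, but it takes a genuinely different route from the paper's. The paper never forms tail suprema: it builds the dominating net directly out of the neighbourhood basis, indexing by the intervals $(a,b)$ containing the limit, ordered by reverse inclusion, and setting $y_{(a,b)}=b-a$; eventual membership $x_\alpha\in(a,b)$ then gives $|x_\alpha-x|=(x_\alpha\vee x)-(x_\alpha\wedge x)\le b-a$, and $y_{(a,b)}\downarrow 0$ by the same soft argument you use at the end (an element below every strictly positive $b$ is $\le\tfrac{1}{2}$ of itself, hence $0$). You instead first extract eventual order boundedness from a single interval $(-b_0,b_0)$ and then invoke Dedekind completeness to form $w_\alpha=\sup\{|x_\gamma|:\gamma\ge\alpha\}$ on a tail. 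What each buys: the paper's construction avoids suprema, but it silently assumes that the interval-indexed family is a directed, decreasing net, which is delicate when $E$ is not totally ordered --- two intervals containing a point need not contain a common subinterval around that point (already in $\mathbb{R}^2$ with the coordinatewise order), so the paper's index set should really be restricted to the symmetric intervals $(x-\epsilon,x+\epsilon)$; your tail-supremum net lives on the (automatically directed) tail of the original index set, so no such issue arises, at the price of using Dedekind completeness --- which is in the hypothesis anyway, and which in return gives the stronger ``same-index'' form of order convergence: a dominating net $w_\alpha\downarrow 0$ with $|x_\alpha|\le w_\alpha$ along a tail. Two cosmetic remarks: your appeal to Lemma \ref{2.1} is misplaced, since that lemma concerns $\tau_o$ (that $(-b,b)$ is a $\tau_e$-neighbourhood of $0$ is immediate from the definition of $\tau_e$, as $(-b,b)$ is itself a basic open set containing $0$); and the degenerate case $E=\{0\}$, where no $b>0$ exists, should be set aside as trivial before fixing $b_0$.
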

\begin{proof}
Assume that $a,b\in E$ with $x\in (a,b)\subseteq E$.  Since $x_\alpha\xrightarrow{\tau_e}x$, there exists $\alpha_{(a,b)}$ such that  $x_\alpha\in (a,b)$ for each  $\alpha\geq \alpha_{(a,b)}$. Put $y_{\alpha_{(a,b)}}= b-a$. On the other hands,  $(\alpha_{(a,b)})_{x\in (a,b)}$ is a directed set with the following order relation 
$$\alpha_{(a,b)}\leq\alpha_{(c,d)}~\text{iff}~ (c,d)\subseteq (a,b).$$
It follows that
\begin{equation*}
\vert x_\alpha -x\vert= (x_\alpha\vee x)- (x_\alpha\wedge x)\leq b-a=y_{\alpha_{(a,b)}}\downarrow 0.
\end{equation*}
Thus $x_\alpha\xrightarrow{o}x$.
\end{proof}
By Example \ref{e1}, the converse of Theorem \ref{t1} in general not holds.

\begin{proposition}
Let $E$ be a Dedekind complete vector lattice and  $\tau_o$ be an order topology for $E$. If $B$ is an ideal and quasi-order closed subset of $E$, then  $B$ is a band in $E$.
\end{proposition}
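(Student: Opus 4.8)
The plan is to use the definition of a band as an order closed order ideal. Since $B$ is assumed to be an order ideal, it suffices to prove that $B$ is order closed, that is, that whenever $(x_\alpha)\subseteq B$ satisfies $x_\alpha\xrightarrow{o}x$ we have $x\in B$. (This also re-derives the observation recorded at the start of the section, that a solid set is quasi-order closed iff it is order closed.) The strategy is to reduce the general order convergence allowed by order closedness to the monotone convergence supplied by the hypothesis of quasi-order closedness.

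First I would reduce to the positive case. Because $B$ is an ideal it is a sublattice, so $x_\alpha^+,x_\alpha^-\in B$; and since $|x_\alpha^+-x^+|\le|x_\alpha-x|$ and $|x_\alpha^--x^-|\le|x_\alpha-x|$, the convergence $x_\alpha\xrightarrow{o}x$ yields $x_\alpha^+\xrightarrow{o}x^+$ and $x_\alpha^-\xrightarrow{o}x^-$. As $B$ is a subspace, proving $x^+\in B$ and $x^-\in B$ gives $x=x^+-x^-\in B$. Hence it is enough to treat a positive net, and I may assume $0\le x_\alpha\in B$ with $x_\alpha\xrightarrow{o}x$, where necessarily $x\ge 0$.

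The central step exploits Dedekind completeness. Writing the order convergence through a net $y_\beta\downarrow 0$ with $|x_\alpha-x|\le y_\beta$ eventually, the net $(x_\alpha)$ is eventually order bounded, so the tail infima $u_\alpha:=\inf_{\gamma\ge\alpha}x_\gamma$ exist in $E$ and form an increasing net. The inequalities $x-y_\beta\le x_\gamma\le x+y_\beta$, valid for all $\gamma$ past an index depending on $\beta$, force $\sup_\alpha u_\alpha=x$: the lower estimate gives $u_\alpha\ge x-y_\beta$ eventually, hence $\sup_\alpha u_\alpha\ge x$, while $u_\alpha\le x_\alpha\le x+y_\beta$ eventually gives $\sup_\alpha u_\alpha\le x$. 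Moreover $0\le u_\alpha\le x_\alpha\in B$, so solidity of $B$ yields $u_\alpha\in B$. Thus $u_\alpha\uparrow x$ with each $u_\alpha\in B$, and quasi-order closedness of $B$ delivers $x\in B$.

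The main obstacle, and the reason Dedekind completeness is indispensable, is manufacturing a \emph{monotone} net inside $B$ with limit $x$: an arbitrary order convergent net is not monotone, and its natural monotone approximants $u_\alpha$ exist only by completeness of $E$ together with eventual order boundedness. A second subtlety is that these approximants lie in $B$ only once they are trapped between $0$ and an element of $B$, which is precisely why the reduction to positive nets must precede the completeness argument; without it the $u_\alpha$ could fail to be controlled by members of $B$.
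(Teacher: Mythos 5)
Your proof is correct, and at the top level it follows the same strategy as the paper: manufacture a monotone net inside $B$ increasing to $x$ and then invoke quasi-order closedness. But the construction of that monotone net is genuinely different, and yours is the more careful of the two. The paper works directly with truncated running suprema $y_\beta=\bigl(\bigvee_{\alpha\le\beta}x_\alpha\bigr)\wedge x$, asserts $y_\beta\uparrow x$, and simply claims $(y_\beta)\subseteq B$; that membership claim is precisely the delicate point, because $x\in B$ is not yet known, so $x_\alpha\wedge x\in B$ cannot be deduced from the sublattice property of $B$, and solidity does not apply without positive data (one would need to rewrite $y_\beta=\bigvee_{\alpha\le\beta}(x_\alpha\wedge x)$ and use $0\le x_\alpha\wedge x\le x_\alpha$, which presupposes a positivity reduction the paper never makes; the existence of $\bigvee_{\alpha\le\beta}x_\alpha$ is likewise left unaddressed). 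You avoid all of this: the reduction to a positive net via $|x_\alpha^{+}-x^{+}|\le|x_\alpha-x|$ and $|x_\alpha^{-}-x^{-}|\le|x_\alpha-x|$, together with the ideal and subspace properties of $B$, is routine, and your tail infima $u_\alpha=\inf_{\gamma\ge\alpha}x_\gamma$ then satisfy $0\le u_\alpha\le x_\alpha$, so solidity places them in $B$ immediately, while Dedekind completeness plus the sandwich $x-y_\beta\le x_\gamma\le x+y_\beta$ gives existence and $u_\alpha\uparrow x$ exactly as you argue. In short, the paper's truncation-by-$x$ trick is notationally slicker but leaves the $B$-membership of its approximants unproved as written, whereas your tail-infimum argument, at the modest cost of the positive-part reduction, supplies a complete justification of that step.
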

\begin{proof}
 Let  $(x_\alpha)\subseteq B$ and $x_\alpha\xrightarrow{o}x$, we show that $x\in B$.
 Obversely $\sup\{x_\alpha \wedge x\}=x$. Set $y_\beta =(\bigvee_{\alpha \leqslant\beta} x_\alpha)\wedge x$, then $y_\beta\uparrow x$. Since $(y_\beta)\subseteq B$ and $B$ is quasi-order closed,  follows that $x\in B$ and  the result follows.
\end{proof}


\bibliographystyle{amsplain}

\end{document}